\newtheorem{theorem}{Theorem}[section]
\newtheorem{lemma}[theorem]{Lemma}
\theoremstyle{definition}
\theoremstyle{remark}
\newcommand{\C}{\mathbb{C}}
\newcommand{\Z}{\mathbb{Z}}
\newcommand{\Q}{\mathbb{Q}}
\numberwithin{equation}{section}
\begin{document}

\title{On modular ball-quotient surfaces\\
of Kodaira dimension one}

\author{Aleksander Momot}

\address{Departement Mathematik, ETH Z\"urich, HG J65,
R\"amistrasse 101,
8092 Z\"urich,
Switzerland}

\email{aleksander.momot@math.ethz.ch}

\subjclass[2000]{Primary 11J25; Secondary 14G35}

\keywords{special surfaces, modular and Shimura varieties, Picard modular surfaces}

\begin{abstract} Let $\Gamma \subset \mathbf{PU}(2,1)$ be a lattice which is not co-compact, of finite Bergman-covolume and acting freely on the open unit ball $\mathbf{B} \subset \mathbb{C}^2$. Then the compactification $X = \overline{\Gamma \setminus \mathbf{B}}$ is a projective smooth surface with an elliptic compactification divisor $D = X \setminus (\Gamma \setminus \mathbf{B})$. In this short note we discover a new class of unramified ball-quotients $X$. We consider ball-quotients $X$ with $kod(X) = h^1(X, \mathcal{O}_X) = 1$. We prove that each minimal surfaces with finite Mordell-Weil group in the class described is up to an \'etale base change the pull-back of the elliptic modular surface which parametrizes triples $(E,x,y)$ of elliptic curves $E$ with $6$-torsion points $x,y \in E[6]$ such that $\Z x+\Z y = E[6]$.
\end{abstract}
\maketitle

\section{Introduction}
Let the symbol $\mathcal{T}$ denote the class of complex projective smooth surfaces $X$ which contain pairwise disjoint elliptic curves $D_1,...,D_{h_X}$ such that $U = X \setminus \bigcup D_i$ admits the open unit ball $\mathbf{B} \subset \C^2$ as universal holomorphic covering; as explained in \cite{Mom}, $\mathcal{T}$ forms the 'generic' class of compactified ball-quotient surfaces. There are several motivations to study surfaces in $\mathcal{T}$ without assuming that $\pi_1(U, \ast)$ with its Poincar\'e action on $\mathbf{B}$ is an arithmetic lattice of $\mathbf{PU}(2, 1)$; we refer to \cite{[Ho98]} or to the introduction of \cite{Mom}. Since the discovery of blown-up abelian surfaces in $\mathcal{T}$ by Hirzebruch and Holzapfel some years ago (cf.\,\cite{[Ho01]}) there have been no further examples of surfaces of special type in $\mathcal{T}$. In this short note we present a new class of modular surfaces $X \in \mathcal{T}$ with $kod(X) = 1$.\\
\\
In what follows we only consider complex projective smooth surfaces. Recall that a minimal elliptic surface $\pi: X \longrightarrow C$ with finite Mordell-Weil group $MW(X)$ of sections is called \textit{extremal} if $rank\,NS(X) = h^{1,1}(X)$. Particular examples arise in the following way. To each pair of positive integers 
$$(m,n) \notin \{(1,1), (1,2), (2,2), (1,3), (1,4), (2,4)\}$$
there exists a modular elliptic surface over $\overline{\mathbb{Q}}$ in the sense of Shioda \cite{Sh}
$$\pi_n(m): X_n(m) \longrightarrow C_n(m)$$
such that $\pi_n(m)$ admits no multiple fibers and has a non-constant $j$-invariant. By \cite{Sh}, $X_n(m)$ is an extremal elliptic surface with the following properties.
\begin{itemize}
 \item $MW(X_n(m)) = \Z/m\Z \times \Z/n\Z$
\item $C_n(m)$ is the (compactified) curve $\overline{\Gamma_m(n) \setminus \mathbb{H}}$ where $\Gamma_n(m) \subset \mathbf{Sl}_2(\Z)$ is the group
$$\left\lbrace \left(\begin{array}{cc} a & b \\ c & d \end{array}
 \right) ; \left(\begin{array}{cc} a & b \\ c & d \end{array}
 \right) \equiv \left(\begin{array}{cc} 1 & \ast \\ 0 & 1 \end{array}
 \right)\,mod\,m, b \equiv 0\,mod\,n \right\rbrace.$$
\item $C_n(m)$ parametrizes triples $\big((E, e_E), x,y\big)$ of elliptic curves $E$ with neutral element $e_E \in E(\C)$ and elements $x \in E[m], y \in E[n]$ such that $|\Z x + \Z y| = mn$.
\item All singular fibers of $\pi_n(M)$ are of type $I_k$ in Kodaira's notation; they lie over the cusps of $c \in C_n(m)$. A representant of $c$ in $\Q \cup \{\infty\}$ is stabilized by a matrix $\gamma \in \Gamma$ which is a $\mathbf{Sl}_2(\Z)$-conjugate of
$$\left(\begin{array}{cc} 1 & k\\0 & 1
         
        \end{array}
 \right).$$ 
\end{itemize}
The points $x$ and $y$ arise from to the intersection of $E$ with generators of $MW(X_n(m))$. More generally, by \cite[Thm.\,1.2, Thm.\,1.3]{Kl} each extremal elliptic surface ${\pi} :{X} \longrightarrow {C}$ with non-constant $j$-invariant, no multiple fibers and
$MW(\tilde{X}) = \Z/m\Z \times \Z/n\Z$, $(m,n)$ as above, allows a cartesian diagram of isogenies
$$\begin{xy} 
  \xymatrix{
  X \ar[rr]^{\pi} \ar[d] & & {{C}} \ar[d]^v\\
   X_n(m) \ar[rr]^{\pi_n(m)} & & C_n(m)
}
\end{xy}$$
With this perspective we are able to formulate our main result. We call a complex projective smooth surface $X$ \textit{irregular} if $h^1(X, \mathcal{O}_X) > 1$.
\begin{theorem} \label{t2} Let $X$ be an irregular minimal surface in $\mathcal{T}$ with $kod(X) = 1$ and finite Mordell-Weil group. Then after an \'etale base change $X$ becomes an extremal elliptic surface fibered over an elliptic curve $C$ such that the following assertions hold.
\begin{enumerate}
 \item There is a cartesian diagram over $\overline{\mathbb{Q}}$
$$\begin{xy} 
  \xymatrix{
  X \ar[rr]^{\pi} \ar[d] & & {{C}} \ar[d]\\
   X_{6}(6) \ar[rr]^{\pi_{6}(6)} & & C_{6}(6)
}
\end{xy}$$
\item The compactification divisor $D$ of $X$ consists of 36 sections of $\pi$, each having self-intersection number $-\chi(X)$. The fibration $\pi$ admits $2\chi(X)$ singular fibers of type $I_6$, and each component of an $I_6$ intersects $D$ in precisely $6$ points. We have $rank\,NS(X) = 10\chi(X) + 2$.  
\end{enumerate}
Conversely, $X_{6}(6)$ is an extremal elliptic and irregular surface in $\mathcal{T}$. 
\end{theorem}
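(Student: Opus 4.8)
The plan is to prove the forward classification (assertions (1) and (2)) in full and then record the converse; throughout I write $\chi = \chi(\mathcal{O}_X)$, which is the quantity denoted $\chi(X)$ in the statement. First I would produce the fibration and identify its base. Since $X$ is minimal with $kod(X)=1$, its Iitaka fibration is a canonically attached elliptic fibration $\pi : X \to C$. Each boundary curve $D_i$ is a smooth elliptic curve appearing in the toroidal cusp resolution of the ball quotient, so $D_i^2 < 0$; as fibres have self-intersection $0$, no $D_i$ is vertical, and $\pi|_{D_i} : D_i \to C$ is a nonconstant morphism of a genus-one curve. By Riemann--Hurwitz this map is unramified (an isogeny) and $g(C) \le 1$. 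The fibration cannot be isotrivial, for an isotrivial elliptic surface is, after a finite étale cover, covered by a product, whose universal cover splits and is therefore not the ball $\mathbf{B}$; hence $j$ is nonconstant, genuine singular fibres exist, and $\chi \ge 1$. Using the standard fact that $q(X)=g(C)$ for a nonisotrivial elliptic surface, the irregularity hypothesis gives $g(C)=q(X)=1$, so $C$ is an elliptic curve.

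Next I would perform the étale base change and extract the numerics. Since the maps $\pi|_{D_i}:D_i\to C$ are isogenies, there is a common isogeny $C'\to C$ through which they factor, giving an étale cover $\mathrm{pr}:X'=X\times_C C'\to X$ with $X'\in\mathcal{T}$, elliptic base $C'$, and with every boundary curve split into sections; this is the base change of the theorem. Renaming $X'\to C'$ as $X\to C$, the presence of a section shows $\pi$ has no multiple fibres, so the canonical bundle formula reads $K_X=\pi^*(K_C+L)$ with $\deg(K_C+L)=\chi$, whence $K_X\cdot S=\chi$ and $S^2=-\chi$ for each section $S$. I then invoke the logarithmic Hirzebruch proportionality for the finite-volume ball quotient $U=X\setminus D$: since $D$ is a disjoint union of elliptic curves, $(K_X+D)^2 = 3\,e(U)=3\,c_2(X)=36\chi$, while $K_X^2=0$ and the adjunction identity $K_X\cdot D_i=-D_i^2$ collapse the left side to $-D^2$. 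Thus $-D^2=36\chi$, and as each section contributes $-S^2=\chi$ this forces $D$ to consist of exactly $36$ sections. Finally $q=1$ together with Noether's identity $p_g=\chi$ give $b_2=c_2-2+4q=12\chi+2$ and $h^{1,1}(X)=b_2-2p_g=10\chi+2$.

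The decisive and hardest step is to pin down the singular fibres and the Mordell--Weil group. With $MW(X)$ finite the $36$ disjoint sections are torsion, so $MW(X)$ has order $\ge 36$ and injects into the direct sum $\bigoplus_v G_v$ of the component groups of the singular fibres. For distinct torsion sections $P,Q$, both disjoint from the zero section and from one another, vanishing of the Mordell--Weil height pairing yields $\sum_v \mathrm{contr}_v(P,Q)=\chi$, a relation that must hold for all $\binom{36}{2}$ pairs, together with the analogous self-height relations. I would combine these rigid constraints with the injection $MW(X)\hookrightarrow\bigoplus_v G_v$ and the Euler-number identity $\sum_v e(F_v)=12\chi$ to conclude that every singular fibre is of type $I_6$, that there are exactly $2\chi$ of them, and that $MW(X)=\Z/6\Z\times\Z/6\Z$. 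Shioda--Tate then returns $rank\,NS(X)=2+\sum_v(n_v-1)=2+(12\chi-2\chi)=10\chi+2=h^{1,1}(X)$, so $X$ is extremal, and the $36$ torsion sections realise the full group $\Z/6\Z\times\Z/6\Z$ over the six components of each $I_6$, six meeting each component, which is assertion (2). This is where the ball-quotient hypothesis enters most essentially, and I expect the combinatorics of the simultaneous height constraints to be the principal obstacle.

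At this point $\pi:X\to C$ is an extremal elliptic surface over an elliptic curve with nonconstant $j$, no multiple fibres and $MW(X)=\Z/6\Z\times\Z/6\Z$; being rigid it is defined over $\overline{\Q}$, and \cite[Thm.\,1.2, Thm.\,1.3]{Kl} supplies the cartesian diagram of isogenies over $\overline{\Q}$ with $\pi_6(6):X_6(6)\to C_6(6)$, which is assertion (1), the remaining numerical data of (2) being transported from the $36$ torsion sections and $I_6$-fibres of $X_6(6)$ described in \cite{Sh}. For the converse I would verify directly that $X_6(6)$ lies in $\mathcal{T}$: its $36$ torsion sections are mutually disjoint smooth elliptic curves, the complement satisfies $\overline{c}_1^2=3\overline{c}_2$, and by the uniformisation criterion of \cite{Mom} this produces a ball-quotient structure, while the computation above gives $kod(X_6(6))=1$ and $q=1$. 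The two obstacles I anticipate are the height-pairing rigidity of the previous paragraph and the verification that numerical proportionality on $X_6(6)$ genuinely yields a ball quotient rather than merely a surface on the Miyaoka--Yau line.
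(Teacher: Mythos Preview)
Your setup matches the paper closely: the elliptic fibration via the Iitaka/Albanese map, the identification of $C$ as an elliptic curve, the \'etale base change turning every $D_i$ into a section, and the numerics $D_i^2=-\chi$, $DF=36$, $-D^2=36\chi$ via the canonical bundle formula and logarithmic proportionality are exactly the paper's Lemma~\ref{efg}. The invocation of \cite{Kl} to obtain the cartesian diagram once the fibre data are known is also the paper's final step.

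The decisive step, however, is handled quite differently. You propose to extract the fibre types and $MW(X)=\Z/6\Z\times\Z/6\Z$ from the simultaneous height-pairing constraints $\sum_v\mathrm{contr}_v(P,Q)=\chi$ for all $\binom{36}{2}$ pairs, and you correctly flag this combinatorics as your principal obstacle --- it is left unresolved in your proposal. The paper bypasses this entirely. Its key input from the ball-quotient hypothesis at this stage is not the height pairing but Lemma~\ref{5.5.}: every smooth rational curve in $X\in\mathcal{T}$ meets $D$ in at least three points. Combined with the Miranda--Persson theory \cite{MP}, this gives a short argument: $|MW_{tor}|\geq 36$ forces $MW_{tor}=\Z/6\Z\times\Z/6\Z$ by \cite[(4.8)]{MP}; all singular fibres are then of type $I_n$ by \cite[Lemma~1.1]{MP}; the three-point property makes the node-isotropy groups $H_n$ nontrivial, and since $MW_{tor}/H_n$ must be cyclic one gets $|H_n|\geq 6$; the identity $\sum_{I_n} n|H_n|^2 = |MW_{tor}|\,c_2(X)=36c_2(X)$ then forces $|H_n|=6$ everywhere, and counting intersections of $D$ with each $I_n$ gives $n=6$. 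You never invoke Lemma~\ref{5.5.}, and without it (or a completed height-pairing argument) your proof has a genuine gap at exactly the point you identified.

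The paper, incidentally, does not spell out the converse; your outline via the proportionality criterion is a reasonable sketch of what would be needed.
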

The diagram in the first statetement is induced by the $j$-invariant.
\section{Some basic properties of surfaces in $\mathcal{T}$} We cite two results on ball-quotient surfaces which will be needed for the proof of the theorem. The first result is essentially \cite[Thm.\,3.1]{TY} specified to $\dim\,X = 2$ with attention to sign conventions, except the assertion on semi-stability. The latter assertion follows from \cite{Miy}. A reduced effective divisor is called \textit{semi-stable} if it has normal crossings and if every rational smooth prime component intersects the remaining components in more than one point.
\begin{theorem} [Tian-Yau/Miyaoka-Sakai]\label{TY} Let $X$ be a smooth projective 
surface and $D \subset X$ a divisor with normal crossings. Suppose that $K_X + 
D$ is big and ample modulo D. Then
$$c_1^2(\Omega_X^1(log\,D)) \leq 3 c_2(\Omega_X^1(log\,D)),$$
with equality holding if an only if $X\setminus D$ is an unramified ball quotient $\Gamma \setminus \mathbf{B}$ and $D$ is semi-stable.                                     
\end{theorem}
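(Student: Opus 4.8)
The plan is to obtain Theorem~\ref{TY} from the existence theory of complete K\"ahler--Einstein metrics on quasi-projective varieties, supplemented by an elementary hyperbolicity argument for the semi-stability clause. Because $K_X+D$ is big and ample modulo $D$, the theorems of Tian--Yau (and, in the surface case, already of Kobayashi and of Cheng--Yau) furnish a complete K\"ahler--Einstein metric $\omega$ on $U:=X\setminus D$ with $\mathrm{Ric}(\omega)=-\omega$ and Poincar\'e-cusp asymptotics along $D$. First I would verify that the Chern forms built from the curvature of $\omega$ are integrable over $U$ and that their integrals compute the logarithmic Chern numbers; in dimension two this amounts to $c_2(\Omega^1_X(\log D))=e(X\setminus D)$ and $c_1^2(\Omega^1_X(\log D))=(K_X+D)^2$, realised as curvature integrals thanks to the decay of the curvature near $D$. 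Next I would apply the pointwise Chern--Weil inequality that holds for any K\"ahler metric on a complex surface: the integrand representing $3c_2-c_1^2$ dominates a non-negative multiple of the squared pointwise norm of the trace-free part of the curvature. Integrating over $U$, and using the curvature bounds to justify convergence of the improper integral, gives
$$c_1^2(\Omega_X^1(\log\,D))\ \leq\ 3\,c_2(\Omega_X^1(\log\,D)).$$

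For the equality statement I would first dispose of the easy implication: if $U=\Gamma\setminus\mathbf{B}$ is an unramified ball quotient of finite Bergman-covolume and $D$ is semi-stable, then Hirzebruch--Mumford proportionality, applied to the toroidal compactification (which coincides with $X$ precisely because $D$ is semi-stable), yields equality. For the converse, equality in the integrated inequality forces the trace-free part of the curvature of $\omega$ to vanish identically, so $\omega$ has constant holomorphic sectional curvature; since $\omega$ is complete and $U$ is smooth, the universal covering of $(U,\omega)$ is $\mathbf{B}$ carrying a constant multiple of its Bergman metric, and the deck group $\Gamma\subset\mathbf{PU}(2,1)$ acts freely and properly discontinuously, whence $U=\Gamma\setminus\mathbf{B}$; finiteness of $\mathrm{vol}_\omega(U)$, which is proportional to $(K_X+D)^2<\infty$, shows $\Gamma$ has finite covolume. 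It remains to see that $D$ is then semi-stable, and here I would use that a ball quotient is Kobayashi hyperbolic: if a smooth rational prime component $R$ of $D$ met the remaining components in at most one point, then $R\setminus(R\cap D)$, an open subset of $U$, would be $\mathbb{P}^1$ minus at most one point and hence not Kobayashi hyperbolic, contradicting the hyperbolicity of $U=\Gamma\setminus\mathbf{B}$; thus every such $R$ meets the other components in at least two points and $D$ is semi-stable. This last point is also exactly the equality clause of the algebraic log-Miyaoka--Sakai inequality, and may instead be quoted from \cite{Miy}.

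The main obstacle is the near-boundary analysis underlying the first paragraph: producing the complete K\"ahler--Einstein metric $\omega$ with asymptotics controlled finely enough along $D$ --- essentially, comparable to a model Poincar\'e-type metric on a punctured polydisc --- so that the Gauss--Bonnet and self-intersection computations acquire no boundary contributions and the improper curvature integrals converge to $e(X\setminus D)$ and $(K_X+D)^2$ respectively. This is the substance of \cite[Thm.\,3.1]{TY}; in reducing that statement to the two-dimensional form above I would also have to reconcile orientation and normalisation conventions, in particular the identity $c_2(\Omega_X^1(\log\,D))=e(X\setminus D)$ and the precise meaning of ``big and ample modulo $D$'', which I take to include nefness of $K_X+D$. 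Alternatively one can bypass the analysis and follow Miyaoka's route: show that $\Omega_X^1(\log\,D)$ is $\mu$-semistable with respect to $K_X+D$ (a destabilising line subsheaf would define a foliation incompatible with bigness of $K_X+D$), deduce the Bogomolov bound $c_1^2(\Omega_X^1(\log\,D))\leq 4\,c_2(\Omega_X^1(\log\,D))$, and then improve the constant from $4$ to $3$ by Miyaoka's argument with semistable sheaves on a cyclic cover; in that approach the delicate step becomes this last refinement together with the algebraic characterisation of the equality case.
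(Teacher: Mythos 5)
Your overall route is the same as the paper's: the paper does not reprove this statement but quotes \cite[Thm.\,3.1]{TY} (the K\"ahler--Einstein existence theory plus Chern--Weil integration) specialized to $\dim X=2$, identifies the logarithmic Chern numbers via the residue exact sequence (so that $c_1^2(\Omega^1_X(\log D))=(K_X+D)^2$ and $c_2(\Omega^1_X(\log D))=c_2(X)-(c_1(X),[D])+([D],[D])$), and attributes the semi-stability assertion in the equality case to \cite{Miy}. Your first paragraph and your closing remarks are a faithful reconstruction of exactly this, and your fallback ``quote \cite{Miy}'' is precisely what the paper does.

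The one step that is genuinely wrong is your hyperbolicity argument for semi-stability. If $R$ is a smooth rational prime component of $D$, then $R$ minus its intersections with the remaining components is contained in $D$ and hence disjoint from $U=X\setminus D$; it is not an open subset of $U$, so the Kobayashi hyperbolicity of $\Gamma\setminus\mathbf{B}$ places no restriction whatever on it. (Hyperbolicity does forbid rational curves meeting $D$ in few points that are \emph{not} contained in $D$ --- that is the content of Lemma~\ref{5.5.} in the paper --- but a boundary component is a different object.) To exclude a rational component meeting the rest of $D$ in at most one point you need either the algebraic analysis of the equality case in \cite{Miy}, or an argument from the cusp structure of finite-volume ball quotients: the minimal smooth compactification has elliptic boundary curves, any other normal-crossing compactification arises from it by blow-ups over the boundary, and one checks that such blow-ups strictly decrease $(K_X+D)^2$ while leaving $e(X\setminus D)=c_2(\Omega^1_X(\log D))$ unchanged, so they destroy equality. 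A similar caveat applies to your forward implication: that $X$ with semi-stable $D$ must \emph{be} the toroidal compactification (so that proportionality applies) is true but requires this same blow-up bookkeeping, not just the definition of semi-stability.
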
 
There is a canonical exact sequence
$$0 \longrightarrow \Omega^1_X \longrightarrow \Omega^1_X(log\,D) 
\stackrel{res}{\longrightarrow} \mathcal{O}_D \longrightarrow 0$$
where $res$ is the Poincar\'e residue map. With this one proves that $c_1(\Omega^1_X(log\,D)) = [D] - c_1(X) \in H^2(X, \mathbb{C})$ and $c_2(\Omega^1_X(log\,D)) = c_2(X)-(c_1(X),[D]) + ([D],[D]) \in H^4(X, \mathbb{C})$. Therefore,
$$c_1^2(\Omega^1_X(log\,D)) = (K_X+D)^2.$$
It is interesting to note that if equality holds in the theorem, then $D$ is smooth. Namely, if $\Gamma' \subset \Gamma$ is a neat normal subgroup with finite index in $\Gamma$, then $\Gamma'\setminus \mathbf{B}$ is compactified by a smooth elliptic divisor, and $\Gamma \setminus \mathbf{B}$ is compactified by a divisor $D$. As $D$ is the quotient $D'/G$, $G = \Gamma/\Gamma'$, it is a normal curve. Hence, $D$ is smooth and consists of elliptic curves, for rational curves cannot appear because of semi-stability.
The next is proved \textit{verbatim} as \cite[Lemma 3.2]{Mom}.
\begin{lemma}\label{5.5.} Let $X$ be in $\mathcal{T}$ with compactification divisor
$D$ and consider an irreducible curve $L \subset X$. If $L$ is smooth rational then $|L \cap D| \geq 3$. If $L$ is a smooth elliptic curve then $|L \cap D| \geq 1$.\end{lemma}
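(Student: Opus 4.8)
The plan is to transport the elementary fact that the bounded domain $\mathbf{B}$ admits no nonconstant holomorphic image of a parabolic Riemann surface up to $U=\Gamma\setminus\mathbf{B}$ via the universal covering, and to read off the two inequalities from the uniformization trichotomy. Write $p\colon\mathbf{B}\to U$ for the universal holomorphic covering supplied by membership of $X$ in $\mathcal{T}$; since $\Gamma$ acts freely, $p$ is an honest unramified covering whose total space $\mathbf{B}$ is simply connected. Given the irreducible curve $L\subset X$ I may assume $L\not\subset D$, for otherwise $L$ is one of the elliptic components $D_i$, hence smooth elliptic with $|L\cap D|=\infty$, and both assertions are vacuous. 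Then $L\cap D$ is a finite set and, as $L$ is smooth, the complement $L^{\circ}:=L\setminus(L\cap D)=L\cap U$ is an open Riemann surface carrying the inclusion $\iota\colon L^{\circ}\hookrightarrow U$.

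First I would manufacture a nonconstant holomorphic map into the ball out of $L^{\circ}$. Let $q\colon\widetilde{L^{\circ}}\to L^{\circ}$ be the universal covering of $L^{\circ}$. Because $\widetilde{L^{\circ}}$ is simply connected, the composite $\iota\circ q\colon\widetilde{L^{\circ}}\to U$ trivially satisfies the lifting criterion against $p$, so it lifts to a holomorphic map $f\colon\widetilde{L^{\circ}}\to\mathbf{B}$ with $p\circ f=\iota\circ q$. Since $\iota$ is an embedding and $q$ is surjective, the image of $\iota\circ q$ is the entire curve $L^{\circ}$, so $\iota\circ q$ is nonconstant; as $p\circ f=\iota\circ q$, the lift $f$ is nonconstant as well.

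The heart of the matter is then the classification of $\widetilde{L^{\circ}}$. Writing $g$ for the genus of $L$ and $k=|L\cap D|$, one has $\chi(L^{\circ})=2-2g-k$, and by uniformization $\widetilde{L^{\circ}}$ is $\mathbb{P}^1$, $\mathbb{C}$ or the unit disc according as $\chi(L^{\circ})>0$, $=0$ or $<0$; the non-hyperbolic cases $\chi\ge 0$ are exactly $\widetilde{L^{\circ}}\in\{\mathbb{P}^1,\mathbb{C}\}$. Suppose $\chi(L^{\circ})\ge 0$. If $\widetilde{L^{\circ}}=\mathbb{P}^1$ then the two coordinates of $f$ are holomorphic functions on the compact Riemann surface $\mathbb{P}^1$, hence constant, so $f$ is constant; if $\widetilde{L^{\circ}}=\mathbb{C}$ then these coordinates are bounded entire functions, hence constant by Liouville, so again $f$ is constant. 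Either way we contradict nonconstancy of $f$. Therefore $\chi(L^{\circ})<0$, i.e.\ $2-2g-k<0$. Specialising, $g=0$ forces $k>2$, giving $|L\cap D|\ge 3$ for smooth rational $L$, while $g=1$ forces $k>0$, giving $|L\cap D|\ge 1$ for smooth elliptic $L$.

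I do not expect a genuine obstacle in this argument, which is essentially the statement that $L^{\circ}$ must be Kobayashi hyperbolic because it embeds in the hyperbolic quotient $U$. The only points requiring care are the existence of the lift $f$ (guaranteed by simple-connectivity of $\widetilde{L^{\circ}}$ together with freeness of the $\Gamma$-action) and the boundedness input, and it is precisely here that the role of the \emph{ball} is essential: the nonexistence of nonconstant maps $\mathbb{C}\to\mathbf{B}$ is exactly what rules out the parabolic possibilities $\chi(L^{\circ})\ge 0$ and thereby forces the two stated intersection bounds.
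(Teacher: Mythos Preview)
Your argument is correct and is the standard hyperbolicity proof: lift $L^{\circ}$ through the universal cover of $U$ to obtain a nonconstant holomorphic map $\widetilde{L^{\circ}}\to\mathbf{B}$, then invoke Liouville to exclude $\widetilde{L^{\circ}}\in\{\mathbb{P}^1,\mathbb{C}\}$, forcing $\chi(L^{\circ})<0$. The paper does not give its own proof here but simply cites \cite[Lemma~3.2]{Mom}; your argument is precisely the expected one and almost certainly what that reference contains. One cosmetic remark: your trichotomy ``$\widetilde{L^{\circ}}$ is $\mathbb{P}^1$, $\mathbb{C}$ or the disc according as $\chi>0$, $=0$, $<0$'' is not quite literally right (for $g=0$, $k=1$ one has $\chi=1$ but $\widetilde{L^{\circ}}=\mathbb{C}$), yet this does not affect the proof since you then correctly treat all cases $\chi\ge 0$ together as $\widetilde{L^{\circ}}\in\{\mathbb{P}^1,\mathbb{C}\}$.
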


\section{Proof of the results} General theory asserts that $X$ admits an elliptic fibration $\pi: X \longrightarrow C$ which is the Albanese morphism. As $K_X + D$ is ample modulo $D$, it follows that a general fiber $F$ has positive self-intersection with $D$. Thus, a component of $D$ dominates $C$. Hence, $C$ is an elliptic curve and $h^1(X, \mathcal{O}_X) = 1$. Moreover, after transition to an etale cover $\tilde{C}$ of $C$ and performing a base change, we can achieve that every $D_i$ is a section, as soon as it dominates $C$ (\cite[Lemma 3.3]{Mom}). We will assume this from now on. Since the curves $D_i$ are pair-wise disjoint, they must be all sections.
\begin{lemma} \label{efg} The identities $36\chi(X) = DF \cdot \chi(X) = -D^2$ and $DF = 36$ hold.
\end{lemma}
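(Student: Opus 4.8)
The plan is to derive all the asserted equalities from the equality case of Theorem~\ref{TY}, combined with adjunction on the sections $D_i$ and the canonical bundle formula for $\pi$. Since $X \in \mathcal{T}$, the open surface $X \setminus D$ is an unramified ball quotient, so Theorem~\ref{TY} holds with equality, $c_1^2(\Omega^1_X(\log D)) = 3\,c_2(\Omega^1_X(\log D))$. Substituting the identities for the logarithmic Chern classes recorded right after Theorem~\ref{TY}, and using $c_1(X) = -K_X$ together with $K_X^2 = 0$ (valid since $X$ is minimal with $kod(X) = 1$, so its elliptic fibration $\pi$ is relatively minimal), this equality collapses to $K_X \cdot D + 2 D^2 = -3\,c_2(X)$.

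Next I would bring in the geometry of $D$. Each $D_i$ is a section of $\pi$, hence an elliptic curve in the compactification divisor, so $g(D_i) = 1$ and adjunction gives $D_i^2 + K_X \cdot D_i = 0$; summing over the pairwise disjoint $D_i$ yields $K_X \cdot D = -D^2$, and feeding this into the previous relation gives $D^2 = -3\,c_2(X)$. By Noether's formula and $K_X^2 = 0$ one has $c_2(X) = 12\,\chi(\mathcal{O}_X)$, so, writing $\chi(X)$ for $\chi(\mathcal{O}_X)$, this reads $-D^2 = 36\,\chi(X)$. To split off $h_X$, the number of components of $D$, I would note that $\pi$ has no multiple fibers (a multiple fiber of multiplicity $m$ would force $m \mid D_i \cdot F = 1$), so the canonical bundle formula over the elliptic base $C$ gives $K_X \equiv \chi(X)\,F$ numerically; hence $K_X \cdot D_i = \chi(X)$ and $D_i^2 = -\chi(X)$ for each $i$. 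Therefore $-D^2 = \sum_i (-D_i^2) = h_X\,\chi(X)$, and comparing with $-D^2 = 36\,\chi(X)$, using $\chi(X) \ge 1$ because $kod(X) = 1$, forces $h_X = 36$. Then $DF = \sum_i D_i \cdot F = h_X = 36$, and the three quantities $-D^2$, $36\,\chi(X)$ and $DF \cdot \chi(X)$ coincide.

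I expect the subtle point to lie not in this numerology but in the preliminary reductions: after the \'etale base change performed just before the lemma one must still have a relatively minimal elliptic fibration $\pi: X \to C$ over an elliptic curve with every $D_i$ a section, and $X \setminus D$ must still be an unramified ball quotient so that the equality case of Theorem~\ref{TY} is available. Granting this, the absence of multiple fibers is automatic from the existence of a section, and the remainder is bookkeeping with intersection numbers and the logarithmic Chern class formulas.
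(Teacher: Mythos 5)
Your proposal is correct and follows essentially the same route as the paper: the equality case of Theorem~\ref{TY} with the logarithmic Chern class formulas, adjunction on the elliptic sections $D_i$, the canonical bundle formula $K_X = \pi^{\ast}\mathfrak{c}$ with $\deg\mathfrak{c} = \chi(X) > 0$, and Noether's formula. The only difference is cosmetic ordering (you first collapse the Tian--Yau equality to $-D^2 = 36\chi(X)$ and then extract $DF = 36$ from the bundle formula, while the paper proceeds in the reverse order), plus your explicit—and correct—justifications of the absence of multiple fibers and of $\chi(X) \geq 1$.
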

\begin{proof} The canonical bundle-formula implies that $K_X = \pi^{\ast}(\mathfrak{c})$ with a divisor Weil divisor $\mathfrak{c} \in Div(C)$. Moreover, $h^0(X, mK_X) = h^0(C, m\mathfrak{c})$. The theorem of Riemann-Roch yields $h^0(X, K_X) = \deg\,\mathfrak{c} > 0.$ Adjunction formula implies that 
$$D_i^2 = - \deg\,\mathfrak{c} = - h^0(X, K_X) = -\chi(X).$$
Hence, $-D^2 = -\sum D_i^2 = DF\chi(X)$. Furthermore, $12\chi(X) = c_2(X)$ by Noether's formula. So, Thm.\,\ref{TY} yields the remaining identities.
\end{proof}
We consider the Mordell-Weil group $MW(X) = MW_{tor}(X)$. It follows that $|MW_{tor}(X)| \geq 36.$ We prove the following lemma of general interest.
\begin{lemma} Let $\pi: X \longrightarrow C$ be a minimal elliptic surface over an elliptic curve $C$ and assume that $kod(X) \geq 1$ and that each rational curve $L \subset X$ meets at least three sections of $\pi$. Suppose moreover that $D = MW_{tor}(X) \geq 33$. Then all singular fibers of $\pi$ are semi-stable of type $I_6$, $X$ has $2\chi(X)$ singular fibres and $MW(X) = MW_{tor}(X) = \Z/6\Z \times \Z/6\Z$ and the rank of the Neron severi group $NS(X)$ equals $h^{1,1}(X) = 10\chi(X) + 2$.   
\end{lemma}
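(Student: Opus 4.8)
\emph{Proof strategy.} The plan is to determine $MW(X)$ exactly first, by exploiting that a large torsion Mordell--Weil group forces a non‑constant map from $C$ to a modular curve of genus $\le 1$, and then to read off the whole fibre configuration from Shioda's height pairing. Throughout write $\chi(X):=\chi(\mathcal{O}_X)$, and note all sections are torsion (this is the content of $MW(X)=MW_{tor}(X)$; $rank\,MW(X)=0$ will in any case re‑emerge at the end).

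First I would record the numerical data. Since $\pi$ has a section it has no multiple fibres, so the canonical bundle formula reads $K_X=\pi^{\ast}\mathfrak{c}$ with $\deg\mathfrak{c}=\chi(X)$; as $kod(X)\ge 1$ we have $\chi(X)>0$, hence (Leray, and Riemann--Roch on $C$) $h^1(X,\mathcal{O}_X)=g(C)=1$ and $p_g(X)=\chi(X)$. Noether's formula gives $c_2(X)=12\chi(X)$, so $b_2(X)=12\chi(X)+2$ and $h^{1,1}(X)=10\chi(X)+2$. By the Shioda--Tate formula $rank\,NS(X)=2+\sum_v(m_v-1)+rank\,MW(X)$, where $m_v$ is the number of components of the fibre $F_v$; thus the last assertion of the lemma is equivalent to saying that every singular fibre is an $I_6$, that there are $2\chi(X)$ of them, and that $rank\,MW(X)=0$.

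Next I would pin down the admissible fibre types and embed the torsion. Every fibre component --- smooth rational, except that the irreducible fibres $I_1$ and $II$ are themselves rational curves --- meets at least three sections by hypothesis, and a component of multiplicity $\ge 2$ meets no section (a section crosses a fibre at a smooth point), so such components do not occur: the possible fibre types are $I_n\ (n\ge 1)$, $II$, $III$, $IV$, each with \emph{cyclic} component group $\Phi_v$, and for each reducible fibre the map $\rho_v\colon MW(X)\to\Phi_v$ recording the component met by a section is onto. A torsion section lying in the subgroup $MW^0(X)$ of sections meeting every identity component would have Shioda height $2\chi(X)+2(P\cdot O)=0$, forcing $P\cdot O=-\chi(X)<0$, hence equal $O$; so $MW_{tor}(X)\cap MW^0(X)=0$ and $\bigoplus_v\rho_v$ embeds $MW_{tor}(X)$ into $\prod_v\Phi_v$. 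Finally $j$ is non‑constant: a trivial fibration over a genus‑one base has $kod=0$, while a non‑trivial isotrivial one has monodromy a non‑trivial finite subgroup of $\mathbf{Sl}_2(\Z)$, hence either carries a fibre with a multiple component (excluded) or has invariant torsion of order $\le 4$ --- either way a hypothesis fails. In particular $MW_{tor}(X)$ is finite, say $\Z/d_1\Z\times\Z/d_2\Z$ with $d_1\mid d_2$.

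The heart of the matter --- and the step I expect to be the main obstacle --- is the identification $MW_{tor}(X)=\Z/6\Z\times\Z/6\Z$. The $d_1d_2$ torsion sections give the non‑isotrivial family over $C$ the level structure of a point of order $d_1$ and a point of order $d_2$ generating a group of order $d_1d_2$, hence a non‑constant morphism $C\to C_{d_2}(d_1)$ onto the modular curve of the Introduction (which exists, since $d_1d_2\ge 33$ rules out the six degenerate pairs). As $g(C)=1$, Riemann--Hurwitz forces $g\bigl(C_{d_2}(d_1)\bigr)\le 1$; running through the finite list of these curves (here the threshold $33$ does the work, eliminating in one stroke every other pair with modular curve of genus $\le 1$, all of which have $d_1d_2\le 32$), the only possibility is $(d_1,d_2)=(6,6)$, with $C_6(6)=\overline{\Gamma(6)\setminus\HH}$ of genus one. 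Hence $MW(X)=MW_{tor}(X)=\Z/6\Z\times\Z/6\Z$, of order $36$, and each $\Phi_v$ being a cyclic quotient of it, a reducible fibre is one of $I_2,I_3,I_6,III,IV$, with $|\Phi_v|\in\{2,3,6\}$.

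Finally I would sum Shioda's identity $0=h(P)=2\chi(X)+2(P\cdot O)-\sum_v\mathrm{contr}_v(P)$ over the $36$ torsion sections. Using $\rho_v(MW_{tor}(X))=\Phi_v$ and $\sum_{i=0}^{\nu-1}\tfrac{i(\nu-i)}{\nu}=\tfrac{\nu^2-1}{6}$, a reducible fibre $F_v$ contributes $\tfrac{6(|\Phi_v|^2-1)}{|\Phi_v|}$ to the right‑hand side, i.e. $9,16,35$ according as $|\Phi_v|=2,3,6$ (fibres $I_1,II$ contribute $0$, and $III,IV$ contribute as $I_2,I_3$). Writing $\sum_P(P\cdot O)=O^2+\sum_{P\ne O}(P\cdot O)=-\chi(X)+\epsilon$ with $\epsilon\ge 0$, and eliminating $\#I_6$ through $\sum_v e(F_v)=12\chi(X)$, the summed relation collapses to an identity
$$2\epsilon=-\tfrac{8}{3}\,\#I_2-\tfrac{3}{2}\,\#I_3-\tfrac{17}{2}\,\#III-\tfrac{22}{3}\,\#IV-\tfrac{35}{6}\,\#I_1-\tfrac{35}{3}\,\#II,$$
whose left side is $\ge 0$ and right side $\le 0$. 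Hence $\epsilon=0$ and all six counts vanish, so every singular fibre is a (semi‑stable) $I_6$; then $\sum_v e(F_v)=6\,\#I_6=12\chi(X)$ gives $\#I_6=2\chi(X)$, and $\epsilon=0$ shows the $36$ sections are pairwise disjoint. The Shioda--Tate formula now reads $rank\,NS(X)=2+5\cdot 2\chi(X)+rank\,MW(X)=10\chi(X)+2+rank\,MW(X)\le h^{1,1}(X)=10\chi(X)+2$, forcing $rank\,MW(X)=0$ and $rank\,NS(X)=h^{1,1}(X)=10\chi(X)+2$. Once the finite list of fibre types is in hand this last paragraph is bookkeeping; the genuinely delicate work is the genus inspection and the exclusion of isotriviality above.
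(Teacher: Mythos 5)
Your route is genuinely different from the paper's: the paper gets $MW=\Z/6\Z\times\Z/6\Z$ by citing Miranda--Persson (4.8), determines the fibres via the translation action of the torsion on the nodes (isotropy groups $H_n$ and the count $36c_2(X)=\sum n|H_n|^2$), and gets the Picard number from their Prop.\ 1.6; you instead use the moduli interpretation (non-constant map $C\to C_{d_2}(d_1)$, genus $\le 1$ by Riemann--Hurwitz) to pin down the torsion, and Shioda's height pairing summed over the $36$ torsion sections to pin down the fibres. Much of this is sound: the exclusion of fibres with multiple components, the height argument showing $MW_{tor}(X)\cap MW^0(X)=0$, the non-isotriviality discussion, and the numerical bookkeeping (your displayed identity is correct under your assumptions). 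The modular-curve step does rest on an unproved assertion --- that every pair $(d_1,d_2)$ other than $(6,6)$ whose modular curve has genus $\le 1$ satisfies $d_1d_2\le 32$ --- but this is a true, standard finite check (Cox--Parry type genus computations), comparable in status to the paper's citation of Miranda--Persson.

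There is, however, a genuine circularity in the middle of your argument. After the modular-curve step you know only the torsion subgroup, yet you immediately write $MW(X)=MW_{tor}(X)$ and use it twice: to restrict reducible fibres to $|\Phi_v|\in\{2,3,6\}$ (hence to the list $I_2,I_3,I_6,III,IV$), and to claim $\rho_v(MW_{tor}(X))=\Phi_v$ in the height sum. The hypothesis only says every fibre component meets three \emph{sections of $\pi$}, i.e.\ $\rho_v$ is onto on all of $MW(X)$; if the rank were positive, a fibre such as $I_{12}$ whose extra components are met only by non-torsion sections is not excluded, and your final identity does not see it --- so the concluding Shioda--Tate step, which is supposed to \emph{produce} $rank\,MW(X)=0$, relies on a fibre list that already presupposed it. The gap is reparable along your own lines: let $h_v\mid 6$ be the order of the image of the torsion in $\Phi_v$; then each singular fibre contributes at most $\tfrac{35}{6}e(F_v)$ to the summed heights, with equality exactly for multiplicative fibres with $h_v=6$, so the requirement that the total equal $70\chi(X)+2\epsilon$ forces $\epsilon=0$, no additive fibres, and $6\mid n_v$ for every fibre; then the number $t$ of singular fibres satisfies $t\le 2\chi(X)$, while Shioda--Tate together with $rank\,NS(X)\le h^{1,1}(X)$ gives $t\ge 2\chi(X)+rank\,MW(X)$, which yields $rank\,MW(X)=0$, $t=2\chi(X)$ and $n_v=6$ all at once. (In the paper's application the finiteness of $MW(X)$ is a standing hypothesis, so there your argument would go through as written; but the lemma as stated asserts it as a conclusion, and your proof plan explicitly promises to recover it.)
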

\begin{proof} The assertion concerning $MW(X)$ follows directly from \cite[(4.8)]{MP}. \cite[Lemma 1.1]{MP} implies then that all singular fibers are of type $I_n$. If $H_n \subset M(X)$ is the non-trivial isotropy group of a node $x \in I_n$ then $MW_{tor}(X)/H_n$ is cyclic by \cite[Lemma 2.2]{MP}. Moreover, all nodes from one and the same fiber have the same isotropy group by \cite[Lemma 2.1, (c)]{MP}, and this isotropy group is non-trivial by \cite[Lemma 2.1, (b)]{MP} and because a component of $I_n$ meets at least three sections. Thus, always $|H_n| \geq 6$. On the other hand, by \cite[p.\,251]{MP} and \cite[Lemma 2.3, (f)]{MP}, $\sum_{I_n} n = c_2(X)$ and 
$$36c_2(X) = |MW_{tor}(X)|c_2(X) = \sum_{I_n} n|H_n|^2.$$
Hence, always $|H_n| = 6$. Let $S \in MW(X)$ be the neutral element. By the proof of \cite[Lemma 2.2]{MP}, $H_n$ consists of precisely those sections meeting the prime component $L \subset I_n$ which contains $S \cap I_n$. However, since we may take any section to be the neutral element of $MW(X)$, for each component $L \subset I_n$ we have $LD = 6$. As $DI_n = 36$, we get $n = 6$. Finally, recalling that $\sum_{I_n} n = c_2(X)$, we find for the number $t$ of singular fibers:

 $$t = 2\chi(X) = 2g(C)-2 + rank\,MW(X) + 2\chi(X).$$

According to \cite[Prop.\,1.6]{MP} this equality holds if and only if $rank\,NS(X) = h^{1,1}(X)$. An easy calculation shows now that $h^{1,1}(X) = 10\chi(X) + 2$.
\end{proof}
It follows that $X$ is isomorphic to a pull-back $X_{6}(6) \times_{C_6(6)}C$.

\end{document}